\newcommand{\comment}[1]{}
\newcommand{\R}{{\mathbb R}}
\begin{document}

\title[On weak$^*$-convergence in $h^1$]{A note on weak$^*$-convergence in $h^1(\R^d)$}         

\author{Ha Duy HUNG}    
\address{High School for Gifted Students,
	Hanoi National University of Education, 136 Xuan Thuy, Hanoi, Vietnam} 
\email{{\tt hunghaduy@gmail.com}}
 \author{Duong Quoc Huy} 
  \address{Department of Natural Science and Technology, Tay Nguyen University, Daklak, Vietnam.}
 \email{duongquochuy@ttn.edu.vn}
  \author{Luong Dang Ky $^*$}
 \address{Department of Mathematics,
 	University of Quy Nhon,
 	170 An Duong Vuong,
 	Quy Nhon, Binh Dinh, Vietnam} 
 \email{{\tt dangky@math.cnrs.fr}}
 \date{}
 \keywords{$H^1$, BMO, VMO}
 \subjclass[2010]{42B30}
 \thanks{The paper was completed when the third author was visiting
 	to Vietnam Institute for Advanced Study in Mathematics (VIASM). He would like
 	to thank the VIASM for financial support and hospitality.\\ 	
 	$^{*}$Corresponding author: Luong Dang Ky}

\begin{abstract}
In this paper, we give a very simple proof of the main result of Dafni (Canad Math Bull 45:46--59, 2002) concerning with weak$^*$-convergence in the local Hardy space $h^1(\R^d)$.
\end{abstract}

\maketitle
\newtheorem{theorem}{Theorem}[section]
\newtheorem{lemma}{Lemma}[section]
\newtheorem{proposition}{Proposition}[section]
\newtheorem{remark}{Remark}[section]
\newtheorem{corollary}{Corollary}[section]
\newtheorem{definition}{Definition}[section]
\newtheorem{example}{Example}[section]
\numberwithin{equation}{section}
\newtheorem{Theorem}{Theorem}[section]
\newtheorem{Lemma}{Lemma}[section]
\newtheorem{Proposition}{Proposition}[section]
\newtheorem{Remark}{Remark}[section]
\newtheorem{Corollary}{Corollary}[section]
\newtheorem{Definition}{Definition}[section]
\newtheorem{Example}{Example}[section]
\newtheorem*{theoremjj}{Theorem J-J}
\newtheorem*{theorema}{Theorem A}
\newtheorem*{theoremb}{Theorem B}
\newtheorem*{theoremc}{Theorem C}

\section{Introduction}

A famous and classical result of Fefferman \cite{Fef} states that the John-Nirenberg space $BMO(\mathbb R^d)$ is the dual of the Hardy space $H^1(\mathbb R^d)$. It is also well-known that $H^1(\mathbb R^d)$ is one of the few examples of separable, nonreflexive Banach space which is a dual space. In fact,  let $C_c(\R^d)$ be the space of all continuous functions with compact support and denote by $VMO(\mathbb R^d)$ the closure of $C_c(\R^d)$ in $BMO(\mathbb R^d)$, Coifman and Weiss showed in \cite{CW} that $H^1(\mathbb R^d)$ is the dual space of $VMO(\mathbb R^d)$, which gives to $H^1(\mathbb R^d)$ a richer structure than $L^1(\mathbb R^d)$. For example, the classical Riesz transforms $\nabla (-\Delta)^{-1/2}$ are not bounded on $L^1(\mathbb R^d)$, but are bounded on  $H^1(\mathbb R^d)$. In addition, the weak$^*$-convergence is true in $H^1(\mathbb R^d)$ (see \cite{JJ}), which is useful  in the application of Hardy spaces to compensated compactness (see \cite{CLMS}) and in studying the endpoint estimates for commutators of singular integral operators (see \cite{Ky1, Ky2, Ky3}). Recently, Dafni showed in \cite{Da} that the local Hardy space $h^1(\R^d)$ of Goldberg \cite{Go} is in fact the dual space of $vmo(\R^d)$ the closure of $C_c(\R^d)$ in $bmo(\mathbb R^d)$. Moreover,  the weak$^*$-convergence is true in $h^1(\mathbb R^d)$. More precisely, in \cite{Da}, the author proved:

\begin{theorem}\label{Dafni 1}
The space $h^1(\R^d)$ is the dual of the space $vmo(\R^d)$.
\end{theorem}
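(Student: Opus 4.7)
My plan is to verify the isomorphism $h^1(\R^d) \cong (vmo(\R^d))^*$ by showing that the natural pairing map $\iota : f \mapsto L_f|_{vmo}$, where $L_f \in (bmo)^*$ is the functional induced by Goldberg's duality $(h^1)^* = bmo$, is a Banach-space isomorphism up to universal constants. Boundedness and injectivity of $\iota$ are routine: since $vmo$ is a norm-closed subspace of $bmo$, restriction yields $\|\iota(f)\|_{(vmo)^*} \leq \|L_f\|_{(bmo)^*} \asymp \|f\|_{h^1}$, and the fact that $C_c^\infty(\R^d) \subset vmo(\R^d)$ separates tempered distributions gives injectivity.

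The substantive direction is surjectivity: given $L \in (vmo)^*$, one must produce $f \in h^1$ with $L = \iota(f)$. I would proceed in three steps. First, since $C_c^\infty \subset vmo$, the restriction of $L$ to $C_c^\infty$ defines a tempered distribution $f$ via $\langle f,\phi\rangle := L(\phi)$. Second, show $f \in h^1$ with $\|f\|_{h^1} \lesssim \|L\|$. Third, the identity $L = \iota(f)$ on all of $vmo$ then follows from the density of $C_c^\infty$ in $vmo$ together with continuity on both sides. For the second step, following a Coifman--Weiss-style template, I would exploit that for every ball $B \subset \R^d$ the subspace $L^\infty_c(B)$ (imposing a zero-mean condition when the radius of $B$ is at most one) sits inside $vmo$ and consists, after normalization, of $h^1$-atoms in the sense of Goldberg. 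A local Riesz-representation argument on appropriate $L^2(B)$-subspaces should produce $f_B \in L^2(B)$ representing $L$ on those atomic subspaces, and patching across balls would yield a global $f \in L^1_{loc}$ agreeing with the distribution from the first step.

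The main obstacle I foresee is converting the $vmo$-dual bound on $L$ into an $h^1$-norm bound on $f$ at the atomic level. The decisive ingredient on the $h^1$--$bmo$ side is the uniform atom--$bmo$ pairing estimate $\big|\int a\phi\big| \leq C\|\phi\|_{bmo}$ valid for every $h^1$-atom $a$, but inverting this estimate to assemble an atomic decomposition of $f$ with summable coefficients directly from $L$ typically requires either a Hahn--Banach extension of $L$ from $vmo$ to $bmo$ together with an annihilator argument, or a Krein--Smulian weak-$*$ compactness argument applied to the canonical embedding $h^1 \hookrightarrow (bmo)^*$. I anticipate that the \emph{very simple proof} promised in the abstract streamlines exactly this step, most likely by exploiting the weak-$*$ density of some concrete dense subspace of $bmo$ (such as $L^\infty_c$) to bypass the local $L^2$-Riesz construction entirely.
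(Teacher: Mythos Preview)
Your proposal sketches a route close to Dafni's original argument and to the Coifman--Weiss template: represent $L$ locally via $L^2$-Riesz on atomic subspaces, patch to a global $f$, and then establish the $h^1$-bound. This is viable in principle, but as you correctly flag, the passage from ``$L$ bounded on $vmo$'' to ``atomic coefficients are summable'' is the heart of the matter, and your sketch leaves it open (Hahn--Banach to $(bmo)^*$ does not immediately land you in $h^1$, since $(bmo)^*$ is strictly larger).

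The paper's proof is genuinely different and sidesteps that obstacle. Fix $\varphi\in C_c$ with $\int\varphi=1$ and set $\psi=\varphi*\varphi$. Given $L\in(vmo)^*$, split $L=(L-\psi*L)+\psi*L$. For the second piece, the trivial bound $|L(\phi)|\le\|L\|\,\|\phi\|_{L^\infty}$ on $C_c$ and the Riesz representation theorem produce a finite signed measure $\mu$ with $L(\phi)=\int\phi\,d\mu$; then Goldberg's estimate $\|\varphi*g\|_{h^1}\le C\|g\|_{L^1}$ gives $\psi*\mu\in h^1$ with norm $\le C\|L\|$. For the first piece, the dual form of Goldberg's other estimate $\|g-\psi*g\|_{H^1}\le C\|g\|_{h^1}$ yields $\|\phi-\overline\psi*\phi\|_{bmo}\le C\|\phi\|_{BMO}$ for $\phi\in C_c$, so $|(L-\psi*L)(\phi)|=|L(\phi-\overline\psi*\phi)|\le C\|L\|\,\|\phi\|_{BMO}$. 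Now the Coifman--Weiss duality $(VMO)^*=H^1$, used as a black box, furnishes $h\in H^1$ with $\|h\|_{H^1}\le C\|L\|$ representing $L-\psi*L$. Setting $f:=h+\psi*\mu\in h^1$ finishes the proof.

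Thus the simplification you anticipated is not a weak-$*$ density argument but a convolution splitting that reduces everything to the already-known $H^1=(VMO)^*$ theorem via Goldberg's two lemmas. Your approach would essentially redo the Coifman--Weiss machinery in the local setting; the paper instead isolates the local-versus-global discrepancy and kills it with the mollifier $\psi$.
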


\begin{theorem}\label{Dafni 2}
	Suppose that $\{f_n\}_{n=1}^\infty$ is a bounded sequence in $h^1(\mathbb R^d)$, and that $\lim_{n\to\infty} f_n(x) = f(x)$ for almost every $x\in\mathbb R^d$. Then, $f\in h^1(\mathbb R^d)$ and $\{f_n\}_{n= 1}^\infty$ weak$^*$-converges to $f$, that is, for every $\phi\in vmo(\mathbb R^d)$, we have
	$$\lim_{n\to\infty} \int_{\mathbb R^d} f_n(x) \phi(x)dx = \int_{\mathbb R^d} f(x) \phi(x) dx.$$
\end{theorem}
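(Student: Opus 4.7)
The plan is to use Theorem~\ref{Dafni 1} to cast $\{f_n\}$ as a bounded sequence in the dual $(vmo)^*$, extract a weak$^*$-convergent subsequence via Banach--Alaoglu, and identify the limit with $f$. Set $M := \sup_n \|f_n\|_{h^1}$. Since $\|g\|_{L^1} \le C\|g\|_{h^1}$ for $g \in h^1$, Fatou's lemma gives $f \in L^1(\R^d)$ with $\|f\|_{L^1} \le CM$. The space $vmo(\R^d)$ is separable (being the $bmo$-closure of the separable space $C_c(\R^d)$), so the closed $h^1$-ball of radius $M$ is weak$^*$-compact and metrizable in $(vmo)^*$ by Theorem~\ref{Dafni 1}. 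Every subsequence of $\{f_n\}$ therefore admits a further subsequence $\{f_{n_k}\}$ weak$^*$-converging to some $g \in h^1$ with $\|g\|_{h^1} \le M$.

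By the standard subsequence principle, both conclusions of the theorem will follow once one shows $g = f$ a.e.\ for every such subsequential limit $g$. Since $g, f \in L^1(\R^d)$, this identification amounts to checking $\int g\phi\,dx = \int f\phi\,dx$ for every $\phi \in C_c^\infty(\R^d) \subset vmo(\R^d)$. Weak$^*$-convergence already provides $\int g\phi\,dx = \lim_k \int f_{n_k}\phi\,dx$, so the task reduces to proving
\begin{equation*}
\lim_{n\to\infty}\int_{\R^d} f_n(x)\phi(x)\,dx = \int_{\R^d} f(x)\phi(x)\,dx, \qquad \phi \in C_c^\infty(\R^d). \tag{$\ast$}
\end{equation*}
Extension of $(\ast)$ from $C_c^\infty$ to all of $vmo$ is then routine from the uniform estimate $|\int f_n \phi\,dx| \le M\|\phi\|_{bmo}$ (Theorem~\ref{Dafni 1}) together with the density of $C_c(\R^d)$ in $vmo$, and the conclusion $f \in h^1$ follows by recognizing the limit functional $\phi \mapsto \int f\phi\,dx$ as a bounded linear form on $vmo$ and applying Theorem~\ref{Dafni 1} again.

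The main obstacle is $(\ast)$, because pointwise convergence plus mere $L^1$-boundedness is not enough: the Dirac-like sequence $n\chi_{[0,1/n]} \to 0$ a.e.\ yet tests non-trivially against any $\phi \in C_c^\infty$ with $\phi(0) \neq 0$, so the $h^1$-bound must be used in an essential way. I would attack $(\ast)$ through the atomic decomposition of $h^1$: write $f_n = \sum_j \lambda_{n,j} a_{n,j}$ with $\sum_j |\lambda_{n,j}| \le 2M$, where each $a_{n,j}$ is supported in a ball $B_{n,j}$ and, for balls of radius less than $1$, satisfies $\int a_{n,j}\,dx = 0$. For small-scale atoms the vanishing-moment condition combined with the Lipschitz regularity of $\phi$ gives $|\int a_{n,j}\phi\,dx| \le C\,\mathrm{diam}(B_{n,j})\,\|\nabla\phi\|_\infty$, while large-scale atoms ($|B_{n,j}| \ge 1$) are uniformly $L^\infty$-bounded on $\mathrm{supp}\,\phi$ and may be handled by Egorov's theorem and dominated convergence. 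The delicate point, and the main technical challenge, is to synchronize these atom-by-atom estimates with the pointwise assumption $f_n \to f$ in order to pass to the limit $n \to \infty$ inside the sum.
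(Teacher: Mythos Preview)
Your opening moves---applying Theorem~\ref{Dafni 1}, invoking Banach--Alaoglu with separability of $vmo$, and reducing via the subsequence principle to the identification of every subsequential weak$^*$-limit---coincide with the paper's strategy. The reduction to $(\ast)$ for test functions $\phi\in C_c^\infty$ is also correct.

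The genuine gap is your proof of $(\ast)$. What you sketch is not a proof: the atom-wise bound $|\int a_{n,j}\phi|\le C\,\mathrm{diam}(B_{n,j})\|\nabla\phi\|_\infty$ summed against $\sum_j|\lambda_{n,j}|\le 2M$ yields at best $CM\sup_j\mathrm{diam}(B_{n,j})$, which has no reason to be small, and for large-scale atoms Egorov's theorem applied to $f_n\to f$ says nothing about the individual atoms $a_{n,j}$, which change with $n$. You yourself flag the ``main technical challenge'' of synchronizing the atomic pieces with the pointwise hypothesis, and you do not resolve it. In effect you are trying to reprove the Jones--Journ\'e theorem for $h^1$ from scratch, and that is exactly the nontrivial content of the result.

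The paper sidesteps this by reducing to the \emph{known} Jones--Journ\'e theorem for $H^1$. Fix $\psi=\varphi*\varphi$ with $\varphi\in C_c^\infty$, $\int\varphi=1$. Goldberg's lemma gives $\|f_n-\psi*f_n\|_{H^1}\le C\|f_n\|_{h^1}\le CM$, so $\{f_n-\psi*f_n\}$ is bounded in $H^1$. Along a sub-subsequence $\{f_{n_{k_j}}\}$ weak$^*$-converging to $g\in h^1$, testing against $\psi(x-\cdot)\in C_c\subset vmo$ gives $(\psi*f_{n_{k_j}})(x)\to(\psi*g)(x)$ for every $x$; combined with $f_{n_{k_j}}\to f$ a.e.\ this yields $f_{n_{k_j}}-\psi*f_{n_{k_j}}\to f-\psi*g$ a.e. Now Jones--Journ\'e applies to this $H^1$-bounded sequence, giving $f-\psi*g\in H^1$ and weak$^*$-convergence in $H^1$. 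Splitting $\int f_{n_{k_j}}\phi=\int(f_{n_{k_j}}-\psi*f_{n_{k_j}})\phi+\int f_{n_{k_j}}(\overline\psi*\phi)$ and passing to the limit in each piece (the first by Jones--Journ\'e, the second by weak$^*$-convergence in $h^1$ since $\overline\psi*\phi\in C_c\subset vmo$) gives exactly $\int f\phi$. The missing idea in your proposal is this Goldberg decomposition $f_n=(f_n-\psi*f_n)+\psi*f_n$, which converts the $h^1$ problem into an $H^1$ problem plus a smooth remainder handled directly by the weak$^*$-convergence you already extracted.
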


The aim of the present paper is to give very simple proofs of the two above theorems. It should be pointed out that our method is different from that of Dafni and it can be generalized to the setting of spaces of homogeneous type (see \cite{Ky4}). 

To this end, we first recall some definitions of the function spaces. As usual, $\mathcal S(\mathbb R^d)$ denotes the Schwartz class of test functions on $\mathbb R^d$. The subspace $\mathcal A$ of $\mathcal S(\mathbb R^d)$ is then defined by
$$\mathcal A=\Big\{\phi\in \mathcal S(\mathbb R^d): |\phi(x)|+ |\nabla\phi(x)|\leq (1+ |x|^2)^{-(d+1)}\Big\},$$
where $\nabla= (\partial/\partial x_1,..., \partial/\partial x_d)$ denotes the gradient. We define
$$\mathfrak M f(x):= \sup\limits_{\phi\in\mathcal A}\sup\limits_{|y-x|<t}|f*\phi_t(y)|\quad\mbox{and}\quad  \mathfrak mf(x):= \sup\limits_{\phi\in\mathcal A}\sup\limits_{|y-x|<t<1}|f*\phi_t(y)|,$$
where $\phi_t(\cdot)= t^{-d}\phi(t^{-1}\cdot)$. The space $H^1(\mathbb R^d)$ is the space of all integrable functions $f$ such that $\mathfrak M f\in L^1(\mathbb R^d)$ equipped with the norm $\|f\|_{H^1}= \|\mathfrak M f\|_{L^1}$. The space $h^1(\mathbb R^d)$ denotes the space of all integrable functions $f$ such that $\mathfrak m f\in L^1(\mathbb R^d)$ equipped with the norm $\|f\|_{h^1}= \|\mathfrak m f\|_{L^1}$.

We remark that the local real Hardy space $h^1(\mathbb R^d)$, first introduced  by Goldberg \cite{Go}, is larger than $H^1(\mathbb R^d)$ and allows  more flexibility, since global cancellation conditions are not necessary. For example, the Schwartz class $\mathcal S(\R^d)$ is contained in $h^1(\mathbb R^d)$ but not in $H^1(\mathbb R^d)$, and multiplication by cutoff  functions preserves  $h^1(\mathbb R^d)$ but not $H^1(\mathbb R^d)$. Thus it makes $h^1(\mathbb R^d)$ more suitable for working in domains and on manifolds. 

It is well-known (see \cite{Fef}) that  the dual space of $H^1(\mathbb R^d)$ is $BMO(\mathbb R^d)$ the space of all  locally integrable functions $f$ with
$$\|f\|_{BMO}:=\sup\limits_{B}\frac{1}{|B|}\int_B \Big|f(x)-\frac{1}{|B|}\int_B f(y) dy\Big|dx<\infty,$$
where the supremum is taken over all balls $B\subset \R^d$. It was also shown in \cite{Go} that the dual space of $h^1(\mathbb R^d)$  can be identified with the space $bmo(\mathbb R^d)$, consisting of locally integrable functions $f$ with
$$\|f\|_{bmo}:= \sup\limits_{|B|\leq 1}\frac{1}{|B|}\int_B \Big|f(x)-\frac{1}{|B|}\int_B f(y) dy\Big|dx+ \sup\limits_{|B|\geq 1}\frac{1}{|B|}\int_B |f(x)|dx<\infty,$$
where the supremums are taken over all balls $B\subset \R^d$.

It is clear that, for any $f\in H^1(\R^d)$ and $g\in bmo(\R^d)$, 
$$\|f\|_{h^1} \leq \|f\|_{H^1}\quad\mbox{and}\quad \|g\|_{BMO} \leq \|g\|_{bmo}.$$

Recall that the space $VMO(\mathbb R^d)$ (resp., $vmo(\mathbb R^d)$) is the closure of $C_c(\mathbb R^d)$ in $(BMO(\mathbb R^d),\|\cdot\|_{BMO})$ (resp., $(bmo(\mathbb R^d),\|\cdot\|_{bmo})$). The following theorem is due to Coifman and Weiss \cite{CW}.

\begin{theorem}\label{Coifman-Weiss}
	The space $H^1(\R^d)$ is the dual of the space $VMO(\R^d)$.
\end{theorem}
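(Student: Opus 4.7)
The plan is to exhibit the duality via the natural pairing $(f,\phi)\mapsto\int_{\R^d}f\phi$, defining $T\colon H^1(\R^d)\to VMO(\R^d)^*$ by $T(f)(\phi)=\int f\phi$, and showing $T$ is an isomorphism onto its image with equivalent norms. Boundedness $\|T(f)\|_{VMO^*}\le C\|f\|_{H^1}$ is immediate from Fefferman's $H^1$-$BMO$ duality applied with $\phi\in VMO\subset BMO$. Injectivity follows because $C_c^\infty(\R^d)\subset VMO$, so the vanishing of $T(f)$ on all test functions forces $f=0$ as a tempered distribution, and hence as an $L^1$-function.

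For the lower bound $\|f\|_{H^1}\le C\|T(f)\|_{VMO^*}$, I would show that $VMO$ is norming for $H^1$. Given $f\in H^1$ and $\varepsilon>0$, Fefferman duality yields $\phi\in BMO$ with $\|\phi\|_{BMO}\le 1$ and $\int f\phi\ge c\|f\|_{H^1}-\varepsilon$. Writing $f=\sum_j\lambda_j a_j$ atomically with atoms supported in balls $B_j$ and $\sum_j|\lambda_j|\le C\|f\|_{H^1}$, I would truncate $\phi$ to a ball $B(0,R)$ capturing all but an $\varepsilon$-fraction of the atomic tail and then mollify, producing $\psi\in C_c^\infty\subset VMO$ with $\|\psi\|_{BMO}\le C$ and $|\int f\phi-\int f\psi|<\varepsilon$, establishing the required norming property.

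The main obstacle is surjectivity. Given $L\in VMO^*$, I would exploit that any mean-zero $L^2$-function supported in a ball $B$ is, after normalisation, proportional to a $(1,2,0)$-atom, hence has $BMO$-norm bounded by an absolute constant, and lies in $VMO$ after mollification. This yields $|L(\phi)|\le C\|L\|\,|B|^{1/2}\|\phi\|_{L^2}$ on $L^2_0(B):=\{\phi\in L^2(B):\int\phi=0\}$. Riesz representation together with compatibility across nested balls produces $f\in L^2_{\mathrm{loc}}(\R^d)$, modulo an additive constant, satisfying $L(\phi)=\int f\phi$ for every compactly supported mean-zero $\phi\in L^2$. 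The hardest step is promoting this $f$ to $H^1$ with $\|f\|_{H^1}\le C\|L\|$: I would estimate the grand maximal function $\mathfrak{M}f$ via the identity $f*\phi_t(y)=L(\phi_t(y-\cdot))$, valid when $\int\phi=0$ (Schwartz functions with mean zero lying in $VMO$ by approximation from $C_c^\infty$), reduce the general case to this one by splitting off a radial bump carrying the mass of $\phi$, and control the level sets of $\mathfrak{M}f$ by a Calder\'on--Zygmund decomposition. The representation $L(\phi)=\int f\phi$ for all $\phi\in VMO$ then follows by density of $C_c^\infty$ in $VMO$.
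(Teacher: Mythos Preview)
First, note that the paper does not prove this theorem: it is quoted from Coifman--Weiss \cite{CW} and invoked as a black box in the proof of Theorem~\ref{Dafni 1}. There is thus no proof in the paper to compare your attempt against.

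Your surjectivity sketch, however, contains a genuine error. You assert that a mean-zero $L^2$-function on a ball $B$, normalised to a $(1,2,0)$-atom, ``has $BMO$-norm bounded by an absolute constant.'' This confuses the two sides of the duality: atoms have uniformly bounded $H^1$ norm, not $BMO$ norm. For example, $a=|B|^{-1}(\mathbf 1_{B_+}-\mathbf 1_{B_-})$ on $B=B(0,r)$ is a $(1,2,0)$-atom, yet $\|a\|_{BMO}\geq c\,r^{-d}\to\infty$ as $r\to 0$. Since $L$ is a functional on $VMO\subset BMO$, it can only be tested against $BMO$-type objects, not against $H^1$ atoms; the claimed bound $|L(\phi)|\le C\|L\|\,|B|^{1/2}\|\phi\|_{L^2}$ on $L^2_0(B)$ therefore does not follow, and the Riesz-representation construction of $f$ collapses. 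The maximal-function step shares the same defect: even for mean-zero $\phi\in\mathcal A$ one has only $\|\phi_t(y-\cdot)\|_{BMO}\le 2t^{-d}\|\phi\|_\infty$, which gives no control on $\mathfrak M f$ as $t\to 0$, and the appeal to a Calder\'on--Zygmund decomposition does not indicate how this is overcome. A smaller issue: in your norming argument, multiplying a $BMO$ function by a cutoff need not keep it in $BMO$ with controlled norm, so the truncation step also requires repair.
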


Throughout the whole paper, $C$ denotes a positive geometric constant which is independent of the main parameters, but may change from line to line.


\section{Proof of Theorems \ref{Dafni 1} and \ref{Dafni 2}}

In this section, we fix $\varphi\in C_c(\R^d)$ with supp $\varphi\subset B(0,1)$ and $\int_{\R^d} \varphi(x) dx=1$. Let $\psi:= \varphi*\varphi$. The following lemma is due to Goldberg \cite{Go}.

\begin{lemma}\label{Golberg}
There exists a positive constant $C=C(d,\varphi)$ such that

{\rm i)} for any $f\in L^1(\R^d)$,
$$\|\varphi*f\|_{h^1}\leq C \|f\|_{L^1};$$

{\rm ii)} for any $g\in h^1(\R^d)$,
$$\|g- \psi*g\|_{H^1}\leq C \|g\|_{h^1}.$$
\end{lemma}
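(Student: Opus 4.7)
Both parts rest on a single pointwise decay estimate: for every bounded, compactly supported function $\eta$ on $\R^d$, every $\phi\in\mathcal A$, and every $t\in(0,1)$,
\begin{equation*}
|\eta*\phi_t(z)| \leq C_\eta\,(1+|z|)^{-(d+1)},\qquad z\in\R^d.
\end{equation*}
To prove this I would split according to whether $|z|$ is small or large relative to the diameter of $\mathrm{supp}\,\eta$. For small $|z|$, use the crude bound $\|\eta\|_\infty\|\phi_t\|_{L^1}\leq C$; for large $|z|$, use that $|z-u|\asymp|z|$ whenever $u\in\mathrm{supp}\,\eta$, combined with the decay $|\phi_t(w)|\leq C\,t^{d+2}|w|^{-2(d+1)}$ for $|w|>t$, which follows immediately from $|\phi(x)|\leq(1+|x|^2)^{-(d+1)}$ and $t<1$.

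For part (i), rewrite $(\varphi*f)*\phi_t(y) = f*(\varphi*\phi_t)(y)$ and apply the core estimate with $\eta=\varphi$. Since $|y-x|<t<1$, one has $1+|y-z|\geq \tfrac12(1+|x-z|)$, so
\begin{equation*}
\mathfrak m(\varphi*f)(x) \leq C\int_{\R^d}|f(z)|\,(1+|x-z|)^{-(d+1)}\,dz.
\end{equation*}
Because $(1+|\cdot|)^{-(d+1)}\in L^1(\R^d)$, Fubini yields $\|\varphi*f\|_{h^1}\leq C\|f\|_{L^1}$.

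For part (ii), I would split $\mathfrak M(g-\psi*g)(x)$ into the ranges $t<1$ and $t\geq 1$. For $t<1$, decompose $(g-\psi*g)*\phi_t(y) = g*\phi_t(y) - g*(\psi*\phi_t)(y)$; the first summand is dominated by $\mathfrak m g(x)$, and the second by a Poisson-like integral of $|g|$ obtained from the core estimate with $\eta=\psi$. For $t\geq 1$, exploit the cancellation $\int\psi=1$: writing
\begin{equation*}
\Phi_t(z) := \phi_t(z)-(\psi*\phi_t)(z) = \int\psi(u)\,[\phi_t(z)-\phi_t(z-u)]\,du,
\end{equation*}
the mean value theorem together with the decay of $\nabla\phi$ built into the definition of $\mathcal A$ produces
\begin{equation*}
|\Phi_t(z)| \leq C\,\frac{t^{d+1}}{(t+|z|)^{2(d+1)}},\qquad t\geq 1.
\end{equation*}
An elementary optimization in $t$ then gives $\sup_{t\geq 1,\ |y-x|<t}\, t^{d+1}/(t+|y-z|)^{2(d+1)}\leq C(1+|x-z|)^{-(d+1)}$. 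Combining the two ranges yields the pointwise domination
\begin{equation*}
\mathfrak M(g-\psi*g)(x) \leq \mathfrak m g(x) + C\int_{\R^d}|g(z)|\,(1+|x-z|)^{-(d+1)}\,dz,
\end{equation*}
and integrating in $x$ gives $\|g-\psi*g\|_{H^1}\leq \|g\|_{h^1}+C\|g\|_{L^1}\leq C'\|g\|_{h^1}$.

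The main obstacle is the large-$t$ regime in part (ii): the mean-value-theorem step must extract precisely the right quantitative gain from $\int\psi=1$, giving the exponent $d+1$ (rather than $d+2$) in the numerator of the bound on $|\Phi_t|$. This single extra factor of $t^{-1}$ is exactly what turns the resulting majorant from a non-integrable rate $(1+|x-z|)^{-d}$ into the borderline-integrable rate $(1+|x-z|)^{-(d+1)}$, which is the crucial point of the whole argument.
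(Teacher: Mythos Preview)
The paper does not prove this lemma at all: it is stated with attribution to Goldberg \cite{Go} and immediately used. So there is no ``paper's own proof'' to compare against; your proposal supplies an argument where the paper simply quotes.

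Your argument is sound and is essentially the standard route (and the one Goldberg takes). The core pointwise estimate, the treatment of part~(i), and the $t<1$ half of part~(ii) are all correct as written. For the $t\ge 1$ regime your mean-value step is also correct: since $\phi\in\mathcal A\subset\mathcal S(\R^d)$ one has $|\nabla\phi_t(w)|\le t^{d+1}(t^2+|w|^2)^{-(d+1)}$, and because $\mathrm{supp}\,\psi\subset B(0,2)$ and $t\ge 1$ the argument $z-su$ stays comparable to $z$ on the scale $t+|z|$, which is exactly what you need. The subsequent bound
\[
\frac{t^{d+1}}{(t+|y-z|)^{2(d+1)}}\le \frac{1}{(t+|y-z|)^{d+1}}\le \frac{C}{(1+|x-z|)^{d+1}}
\]
(using $t+|y-z|\ge \max\{1,|x-z|\}$ when $|y-x|<t$ and $t\ge 1$) gives the claimed majorant. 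Integrating and using $\|g\|_{L^1}\le C\|g\|_{h^1}$ finishes the proof. One cosmetic point: you might note explicitly that $\int\psi=(\int\varphi)^2=1$, which is what makes the cancellation step legitimate.
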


As a consequence of Lemma \ref{Golberg}(ii), for any $\phi\in C_c(\R^d)$, 
\begin{equation}\label{from big bmo to small bmo}
	\|\phi - \overline{\psi}*\phi\|_{bmo} \leq C \|\phi\|_{BMO},
\end{equation}
here and hereafter, $\overline{\psi}(x):= \psi(-x)$ for all $x\in \R^d$.

\begin{proof}[Proof of Theorem \ref{Dafni 1}]
Since $vmo(\mathbb R^d)$ is a subspace of  $bmo(\mathbb R^d)$, which is the dual space of $h^1(\mathbb R^d)$, every function $f$ in $h^1(\mathbb R^d)$ determines a bounded linear functional on $vmo(\mathbb R^d)$ of norm bounded by $\|f\|_{h^1}$.

Conversely, given a bounded linear functional $L$ on $vmo(\mathbb R^d)$. Then, 
$$|L(\phi)|\leq \|L\| \|\phi\|_{vmo}\leq \|L\| \|\phi\|_{L^\infty}$$
for all $\phi\in C_c(\mathbb R^d)$. This implies (see \cite{Ro}) that there exists a finite signed Radon measure $\mu$ on $\R^d$ such that, for any $\phi\in C_c(\mathbb R^d)$, 
$$L(\phi)= \int_{\R^d} \phi(x) d\mu(x),$$
moreover,  the total variation of $\mu$, $|\mu|(\R^d)$,  is bounded by $\|L\|$. Therefore, 
\begin{equation}\label{Dafni 1, 1}
\|\psi*\mu\|_{h^1} = \|\varphi*(\varphi*\mu)\|_{h^1} \leq C \|\varphi*\mu\|_{L^1}\leq C |\mu|(\R^d)\leq C \|L\|
\end{equation}
by Lemma \ref{Golberg}. On the other hand, by (\ref{from big bmo to small bmo}), we have
\begin{eqnarray*}
|(L-\psi*L)(\phi)|=|L(\phi - \overline{\psi}*\phi)|&\leq& \|L\| \|\phi - \overline{\psi}*\phi\|_{vmo}\\
&\leq& C \|L\| \|\phi\|_{BMO}
\end{eqnarray*}
for all $\phi\in C_c(\R^d)$. Consequently, by Theorem \ref{Coifman-Weiss}, there exists a function $h$ belongs $H^1(\R^d)$ such that $\|h\|_{H^1}\leq C \|L\|$ and
$$(L-\psi*L)(\phi)= \int_{\R^d} h(x) \phi(x) dx$$
for all  $\phi\in C_c(\R^d)$. This, together with (\ref{Dafni 1, 1}), allows us to conclude that
$$L(\phi)= \int_{\R^d} f(x) \phi(x) dx$$
for all $\phi\in C_c(\R^d)$, where $f:= h+ \psi*\mu\in h^1(\R^d)$ satisfying $\|f\|_{h^1}\leq \|h\|_{H^1} + \|\psi*\mu\|_{h^1}\leq C \|L\|$. The proof of Theorem \ref{Dafni 1} is thus completed.
\end{proof}

\begin{proof}[Proof of Theorem \ref{Dafni 2}]
Let $\{f_{n_k}\}_{k=1}^\infty$ be an arbitrary subsequence of $\{f_n\}_{n=1}^\infty$.	As $\{f_{n_k}\}_{k=1}^\infty$ is a bounded sequence in $h^1(\R^d)$, by Theorem \ref{Dafni 1} and the Banach-Alaoglu theorem, there exists a subsequence $\{f_{n_{k_j}}\}_{j=1}^\infty$ of $\{f_{n_k}\}_{k=1}^\infty$ such that $\{f_{n_{k_j}}\}_{j=1}^\infty$ weak$^*$-converges to $g$ for some $g\in h^1(\R^d)$. Therefore, for any  $x\in \R^d$,
$$\lim_{j\to\infty} \int_{\R^d} f_{n_{k_j}}(y) \psi(x-y) dy = \int_{\R^d} g(y) \psi(x-y) dy.$$
This implies that $\lim_{j\to\infty}[f_{n_{k_j}}(x) - (f_{n_{k_j}}*\psi)(x)]= f(x) - (g*\psi)(x)$ for almost every $x\in\mathbb R^d$. Hence, by Lemma \ref{Golberg}(ii) and the Jones-Journ\'e's theorem (see \cite{JJ}),  
$$\|f- g*\psi\|_{H^1}\leq \sup_{j\geq 1}\|f_{n_{k_j}} - f_{n_{k_j}}*\psi\|_{H^1}\leq C \sup_{j\geq 1} \|f_{n_{k_j}}\|_{h^1}<\infty,$$
moreover,
$$\lim_{j\to\infty} \int_{\mathbb R^d} [f_{n_{k_j}}(x) - (f_{n_{k_j}}*\psi)(x)]\phi(x)dx =  \int_{\mathbb R^d}  [f(x) - (g*\psi)(x)]\phi(x) dx$$
for all $\phi\in C_c(\mathbb R^d)$. As a consequence, we obtain that
\begin{eqnarray*}
\|f\|_{h^1} \leq \|f- g*\psi\|_{h^1} + \|g*\psi\|_{h^1}&\leq& \|f- g*\psi\|_{H^1} + C\|g\|_{h^1}\\
&\leq& C \sup_{j\geq 1} \|f_{n_{k_j}}\|_{h^1}<\infty,
\end{eqnarray*}
moreover,
\begin{eqnarray*}
	&&\lim_{j\to\infty} \int_{\mathbb R^d} f_{n_{k_j}}(x)\phi(x)dx \\
	&=& \lim_{j\to\infty} \int_{\mathbb R^d} [f_{n_{k_j}}(x) - (f_{n_{k_j}}*\psi)(x)]\phi(x)dx + \lim_{j\to\infty} \int_{\mathbb R^d} f_{n_{k_j}}(x) (\overline\psi*\phi)(x)dx\\
	&=& \int_{\mathbb R^d}  [f(x) - (g*\psi)(x)]\phi(x) dx + \int_{\mathbb R^d} g(x) (\overline\psi*\phi)(x)dx\\
	&=& \int_{\mathbb R^d} f(x)\phi(x)dx
\end{eqnarray*}
since $\{f_{n_{k_j}}\}_{j=1}^\infty$ weak$^*$-converges to $g$ in $h^1(\mathbb R^d)$. This, by $\{f_{n_k}\}_{k=1}^\infty$ be an arbitrary subsequence of $\{f_n\}_{n=1}^\infty$,  allows us to complete the proof of Theorem \ref{Dafni 2}.
\end{proof}



%

\end{document}